\newtheorem{theorem}{Theorem}[section]
\newtheorem{corollary}[theorem]{Corollary}
\begin{document}

\title{Separate continuity topology and a generalization of Sierpinski's theorem}

\author{V.V.Mykhaylyuk}
\address{Department of Mathematics\\
Chernivtsi National University\\ str. Kotsjubyn'skogo 2,
Chernivtsi, 58012 Ukraine}
\email{vmykhaylyuk@ukr.net}

\subjclass[2000]{Primary 54A10, 54D10; Secondary 54B10, 54C05, 54C30,  54D30}


\commby{Ronald A. Fintushel}


\keywords{separately continuous functions, separation axioms, Sierpinski theorem}

\begin{abstract}
The separately continuity topology is considered and some its properties are investigated. With help of these properties a generalization of Sierpinski theorem on determination of real separately continuous function by its values on an arbitrary dense set is obtained.
\end{abstract}

\maketitle
\section{Introduction}

It is well-known that every continuous function $f$ which defined on a topological space $X$ and valued in a Hausdorff space $Y$ totally determined by its values on a dense in $X$ set. The following question naturally arises: have the same property functions which are separately continuous? W.Sierpinski in [1] solved this problem for separately continuous functions $f:{\bf R}^2\to {\bf R}$. Further development of this result is closely connected with the investigation of joint continuity points set of separately continuous mappings and with the establishment of properties of the separately continuous topology. So, Z.~Piotrowski and E.~Wingler established in [2] that if every separately continuous function which defined on the product $X=X_1\times ...\times X_n$ and valued in the regular space $Z$ is almost continuous then every separately continuous function $f:X\to Z$ totally determined by its values on a dense in $X$ set. Recall that a function is called {\it almost continuous} if every nonempty preimage of open set has nonempty interior. Since every quasicontinuous function is almost continuous, it follows from result of paper [3] that if $X$ is a Baire space, $Y$ satisfies the first countable axiom and $Z$ is a completely regular space, then every separately continuous function $f:X\times Y\to Z$ totally determined by its values on a dense in $X\times Y$ set.

In this paper we study some properties of separately continuous topology and show that the condition of the completely regularity of $Z$ in the previous result can be weakened.

\section{Some notions}

Let $X$ and $Y$ be a topological spaces. We denote by ${\mathcal S}$ the collection of all subsets $S$ of the product $X\times Y$ which have the following property: {\it if $(x_0,y_0)\in S$ then there exist neighborhoods $U$ of $x_0$ and $V$ of $y_0$ in $X$ and $Y$ respectively such that $(\{x_0\}\times V)\bigcup (U\times\{y_0\})\subseteq S$}. Clearly that the system ${\mathcal S}$ forms a topology on the set $X\times Y$. Moreover, for every topological space $Z$ a function $f:X\times Y\to Z$ is continuous in this topology if and only if $f$ is separately continuous. This topology is  called {\it the separately continuous topology} or $s$-{\it topology}. For a set $A\subseteq X\times Y$ the its closure in the $s$-topology we denote by $[A]_s$.

Recall that the minimal cardinality of a family $\mathcal U$ of open nonempty sets $U\subseteq X$ such that for every neighborhood $V$ of $x$ there exists $U\in \mathcal U$ with $U\subseteq V$, is called {$\pi$-character $\pi_{\chi}(x,X)$ of  $X$ at $x\in X$}. The cardinal $\pi_{\chi}(X)=\sup \limits_{x\in X}\pi_{\chi}(x,X)$ is called {\it $\pi$-character of $X$}. Clearly that a space with the first countable axiom has at most countable $\pi$-character.

\section{Separately continuous topology, product topology and Sierpinski theorem}

We start from a result which show that the separately continuous topology is closely connected with the product topology.

\begin{theorem}\label{th:2.1} Let $X$ be a Baire space,  $Y$ be a topological space with the at most countable $\pi$-character, $H$ be an $s$-open set in $X\times Y$. Then there exists an open in $X\times Y$ set $G$ such that $[G]_s=[H]_s$.
\end{theorem}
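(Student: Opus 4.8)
The plan is to take for $G$ the interior, in the ordinary product topology of $X\times Y$, of the $s$-closure of $H$; write $G=\mathrm{int}([H]_s)$. Before anything else I would record the slice description of $s$-openness that is implicit in the defining cross condition: a set is $s$-open exactly when all of its horizontal slices $\{x:(x,y)\in\cdot\}$ and all of its vertical slices $\{y:(x,y)\in\cdot\}$ are open. This is elementary but it is the workhorse of the whole argument, and it gives for free that a finite intersection of $s$-open sets is $s$-open. With $G$ as above one inclusion is immediate: since $G\subseteq[H]_s$ and $[H]_s$ is $s$-closed, monotonicity of the $s$-closure yields $[G]_s\subseteq[H]_s$.

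For the reverse inclusion it is enough to prove $H\subseteq[G]_s$, because the $s$-closed set $[G]_s$ would then contain $H$ and hence contain the smallest such set $[H]_s$. So I would fix $(x_0,y_0)\in H$ together with an arbitrary $s$-open $O\ni(x_0,y_0)$ and try to exhibit a point of $G$ inside $O$. The decisive localisation is that $W:=O\cap H$ is again $s$-open and nonempty; from here the entire construction takes place inside $W$.

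The core is a Baire-category construction inside $W$ that manufactures a product-open rectangle lying in $[H]_s$ and simultaneously meeting $O$, and this is exactly where both hypotheses are consumed. Using $\pi_\chi(y_0,Y)\le\aleph_0$, I would fix a countable $\pi$-base $\{V_n\}$ at $y_0$. The horizontal slice $W^{y_0}$ is a nonempty open subset of $X$, and for each $x\in W^{y_0}$ the vertical slice $W_x$ is a neighbourhood of $y_0$, so $V_n\subseteq W_x$ for some $n$. Hence $W^{y_0}=\bigcup_n E_n$, where $E_n=\{x\in W^{y_0}:\{x\}\times V_n\subseteq W\}$. Since $W^{y_0}$ is open in the Baire space $X$ it is itself Baire, so the $E_n$ cannot all be nowhere dense; I pick $n_0$ with $E_{n_0}$ somewhere dense and a nonempty open $U\subseteq W^{y_0}$ in which $E_{n_0}$ is dense, and set $R=U\times V_{n_0}$.

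Two facts then close the argument. First, $(E_{n_0}\cap U)\times V_{n_0}\subseteq W\subseteq O$, so $R$ already meets $O$ in points of $W$. Second, and this is the step I expect to be the main obstacle, $R\subseteq[H]_s$: for $x\in E_{n_0}$ one has $\{x\}\times V_{n_0}\subseteq H$, and since $E_{n_0}\cap U$ is dense in $U$, any $s$-open neighbourhood of a point $(x',y')\in U\times V_{n_0}$ has an open horizontal slice at $y'$ containing $x'\in\overline{E_{n_0}\cap U}$, hence meets $E_{n_0}\cap U$ and therefore meets $H$; this forces $U\times V_{n_0}\subseteq[H]_s$. As $R$ is product-open and contained in $[H]_s$, we get $R\subseteq G$, so the nonempty set $(E_{n_0}\cap U)\times V_{n_0}$ lies in $G\cap O$ and $O$ meets $G$; arbitrariness of $O$ gives $(x_0,y_0)\in[G]_s$. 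The delicate points I anticipate are precisely the verification that the density argument places the \emph{full} rectangle $U\times V_{n_0}$ (not merely $E_{n_0}\times V_{n_0}$) inside $[H]_s$, and the bookkeeping needed to keep $U$ inside $W^{y_0}$ so that the density of $E_{n_0}$ is genuinely available; note that taking $\mathrm{int}(H)$ instead of $\mathrm{int}([H]_s)$ would fail here, since an $s$-open set can have empty product interior.
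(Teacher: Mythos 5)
Your proposal is correct and takes essentially the same route as the paper's proof: you set $G=\mathrm{int}([H]_s)$, decompose the open horizontal slice at $y_0$ as a countable union $\bigcup_n E_n$ using the countable $\pi$-base and the $s$-openness of $W=O\cap H$, invoke the Baire property to find a nonempty open $U$ in which some $E_{n_0}$ is dense, and then use the cross condition plus density to show the full rectangle $U\times V_{n_0}$ lies in $[H]_s$, exactly as in the paper. The only differences are cosmetic (working with the full slice $W^{y_0}$ rather than a chosen neighborhood $U$ of $x_0$, and stating the slice characterization of $s$-openness explicitly).
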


\begin{proof} We put $F=[H]_s$, $G={\rm int} F$. Clearly that $G\subseteq F$, therefore $[G]_s\subseteq [F]_s=F$.

Now we show that $F\subseteq [G]_s$. Let $(x_0,y_0)\in H$ and $W$ be an open $s$-neighborhood of $(x_0,y_0)$. Since the set $H$ is $s$-open, there exists an open neighborhood $U$ of $x_0$ in $X$ such that $U\times \{y_0\}\subseteq H\bigcap W$. The space $Y$ has an at most countable $\pi$-character, therefore there exists an at most countable family $(V_n:n\in {\bf N})$ of open in $X$ nonempty sets which provides the countability of the $\pi$-character of $Y$ at $y_0$. We put  $A_n=\{x \in U: \{ x \} \times V_n \subseteq H\bigcap W \}$. The set $H\bigcap W$ is $s$-відкритою, therefore for every $x\in U$ there exists a neighborhood $V$
of $y_0$ in $Y$ such that $\{x\}\times V\subseteq H\bigcap W$. Thus there exists $n\in {\bf N}$ such that $x\in A_n$. That is $\bigcup\limits_{n=1}^{\infty}A_n=U$. But $X$ is a Baire space, therefore there exist an $n_0 \in {\bf N}$ and an open in $X$ nonempty set $U_1\subseteq U$ such that $U_1\subseteq \overline{A}$, where $A=A_{n_0}$. We put $V_1=V_{n_0}$. Take a point $(x_1,y_1)\in U_1\times V_{1}$ and an its $s$-neighborhood $W_1$. There exists a neighborhood $U_2$ of $x_2$ such that $U_2\times \{ y_1\}\subseteq W_1$. Note that $U_2\bigcap A \not= \O$, therefore $(U_2\times \{ y_1\} )\bigcap (A\times V_{1}) \not= \O$. Thus, $W_1\bigcap (A\times V_{1})\not= \O$. Hence, $(x_1,y_1)\in [A\times V_{1}]_s$, i.e. $U_1\times V_{1}\subseteq [A\times
V_{1}]_s$. But $A\times V_{1}\subseteq H$, and thus, $U_1\times V_{1}\subseteq [H]_s=F$. This means that $U_1\times V_{1}\subseteq {\rm int} (F)=G$. In particular, $A_1\times V_1\subseteq G$, where $A_1=A\bigcap U_1$. Moreover, $A_1\times V_1\subseteq A\times V_1 \subseteq H\bigcap W \subseteq W$. Therefore $G\bigcap W\not= \O$, i.e. $(x_0,y_0)\in [G]_s$. Hence, $H\subseteq [G]_s$, $F=[H]_s\subseteq [G]_s$ and the theorem is proved.
\end{proof}

Using this result we obtain the following generalization of Sierpinski's theorem on the case of separately continuous mappings of two variables.

\begin{theorem}\label{th:2.2} Let $X$ be a Baire space, $Y$ be a topological space with an atmost countable $\pi$-character, $Z$ be a topological space in which every two distinct points can be separated by closed neighborhoods. Then every separately continuous function $f:X\times Y\to Z$ totally determined by its values on a dense in $X\times Y$ set.
\end{theorem}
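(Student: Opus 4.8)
The plan is to argue by contradiction, reducing everything to a single application of Theorem~\ref{th:2.1}. Let $D\subseteq X\times Y$ be dense in the product topology, and suppose $f,g:X\times Y\to Z$ are separately continuous with $f=g$ on $D$; I want to show $f=g$ everywhere. The key reformulation is that separate continuity is exactly continuity for the $s$-topology, so both $f$ and $g$ are $s$-continuous maps on $X\times Y$. Assuming $f(x_0,y_0)\ne g(x_0,y_0)$ at some point, I will build an $s$-open set on which $f$ and $g$ are forced into disjoint regions of $Z$, and then locate a point of $D$ where this is impossible.

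First I use the separation hypothesis on $Z$: since the two values $f(x_0,y_0)$ and $g(x_0,y_0)$ are distinct, they have disjoint closed neighborhoods, and taking interiors gives open sets $O_1\ni f(x_0,y_0)$ and $O_2\ni g(x_0,y_0)$ with $\overline{O_1}\cap\overline{O_2}=\O$. I then set $H=f^{-1}(O_1)\cap g^{-1}(O_2)$. Because $f$ and $g$ are $s$-continuous, $H$ is $s$-open, and it is nonempty as it contains $(x_0,y_0)$. By construction $f(H)\subseteq O_1$ and $g(H)\subseteq O_2$.

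The crux, and the place where Theorem~\ref{th:2.1} does the real work, is the following density mismatch. Since the $s$-topology is finer than the product topology, product density of $D$ is weaker than $s$-density, so I cannot directly conclude that $D$ meets $H$ usefully. Instead I apply Theorem~\ref{th:2.1} to the $s$-open set $H$, obtaining a product-open set $G$ with $[G]_s=[H]_s$. As $H\subseteq[H]_s=[G]_s$ and $H\ne\O$, the set $G$ is a nonempty product-open set, so density of $D$ furnishes a point $(a,b)\in D\cap G$. This is exactly the step I expect to be the main obstacle: without Theorem~\ref{th:2.1} one only controls the $s$-closure of $H$, whereas $D$ is only product-dense, and Theorem~\ref{th:2.1} is what converts the given product density into something usable against that $s$-closure.

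It then remains to derive the contradiction. Since $(a,b)\in G\subseteq[G]_s=[H]_s$, every $s$-neighborhood $W$ of $(a,b)$ meets $H$, and as $f(H)\subseteq O_1$ this gives $f(W)\cap O_1\ne\O$ for all such $W$. If $f(a,b)\notin\overline{O_1}$, then $Z\setminus\overline{O_1}$ would be a neighborhood of $f(a,b)$ whose $f$-preimage is an $s$-neighborhood of $(a,b)$ disjoint from $H$, a contradiction; hence $f(a,b)\in\overline{O_1}$. Running the same argument with $g$ and $O_2$ yields $g(a,b)\in\overline{O_2}$. But $(a,b)\in D$, so $f(a,b)=g(a,b)$, forcing $f(a,b)\in\overline{O_1}\cap\overline{O_2}=\O$, which is absurd. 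Therefore no such $(x_0,y_0)$ exists, $f=g$, and $f$ is totally determined by its values on $D$.
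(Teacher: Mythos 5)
Your proposal is correct and takes essentially the same route as the paper: the same disjoint closed neighborhoods $O_1,O_2$ in $Z$, the same nonempty $s$-open set $H=f^{-1}(O_1)\cap g^{-1}(O_2)$, and the same single application of Theorem~\ref{th:2.1} to produce a nonempty product-open $G$ with $[G]_s=[H]_s$ that must meet the dense set. The only cosmetic difference is that you derive the contradiction pointwise at $(a,b)\in D\cap G$ (showing $f(a,b)\in\overline{O_1}$ and $g(a,b)\in\overline{O_2}$ via $s$-continuity), whereas the paper phrases the same fact as the set inclusions $f([G]_s)\subseteq\overline{W_1}$, $g([G]_s)\subseteq\overline{W_2}$ and concludes $G\cap A=\O$.
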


\begin{proof} Let $f:X\times Y\to Z$ and $g:X\times Y\to Z$ be separately continuous functions, moreover $f_{|_A}=g_{|_A}$, where $A$ is a dense in $X\times Y$ set. We prove that $f=g$.

Suppose the contrary. Thus there exists a point $p\in X\times Y$ such that $z_1=f(p)\not= g(p)=z_2$. Take open neighborhoods $W_1$ and $W_2$ of points $z_1$ and $z_2$ in $Z$  respectively such that $\overline{W_1}\bigcap \overline{W_2}=\O$. The sets $H_1=f^{-1}(W_1)$ and $H_2=g^{-1}(W_2)$ are $s$-open neighborhoods of $p$. Therefore the set $H=H_1\bigcap H_2$ is nonempty and $s$-open. Since $F=[H]_s\subseteq [H_1]_s\bigcap [H_2]_s$, $f(F)\subseteq f([H_1]_s)\subseteq \overline{W_1}$ and $g(F)\subseteq g([H_2]_s)\subseteq \overline{W_2}$. According to Theorem \ref{th:2.1}, there exists an open in $X\times Y$ set $G$ such that $[G]_s=F$. We have $f(G)\subseteq \overline{W_1}$ and $g(G)\subseteq \overline{W_2}$. Therefore $f(G)\bigcap g(G)=\O$. But since $f_{|_A}=g_{|_A}$, $G\bigcap A=\O$, a contradiction.
\end{proof}

\section{Nonregularity of separately continuous topology}

In the proof of Theorem \ref{th:2.2} we use an weaker result than Theorem \ref{th:2.1}. Namely, it is enough to have that ${\rm int}(F)\ne \O$. But, taking account that this conclusion can be true for every $s$-open set $H$, we obtain that the difference between these two conclusions is formal only. In this connection the following question naturally arises: is it true ${\rm int}(H)\ne \O$? Or, in other words, is every dense in $X\times Y$ set a $s$-dense set? The positive answer to this question give us the possibility to weaken the condition on the space $Z$ in Theorem \ref{th:2.2} to the Hausdorff condition. But the following result shows that the question formulated above has the negative answer.

\begin{theorem} \label{th:3.1} Let $X$ and $Y$ be infinite $T_1$-spaces such that there exist a pseudobase in $X\times Y$ with the cardinality no more than the cardinality of every open in $X$ or $Y$ sets. Then there exists a dense in $X\times Y$ set $C$ which is closed and nowhere dense in the separately continuous topology.\end{theorem}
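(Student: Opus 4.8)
The plan is to reduce the statement to an explicit transfinite construction. Write $\mathcal{P}$ for the pseudobase given by hypothesis and set $\kappa=|\mathcal{P}|$. A short preliminary argument shows $\kappa$ is infinite: an infinite $T_1$ product cannot have a finite pseudobase, since for any finite family of nonempty open sets one picks one point from each, and then a $T_1$ open set around a further point misses all of them. The hypothesis then says that every nonempty open subset of $X$ and every nonempty open subset of $Y$ has cardinality at least $\kappa\geq\aleph_0$; in particular neither $X$ nor $Y$ has an isolated point, since an isolated point would give an open singleton of cardinality $1<\kappa$.

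Next I would isolate the combinatorial target: it suffices to produce a set $C\subseteq X\times Y$ that meets every member of $\mathcal{P}$ and whose every vertical section $C_x=\{y:(x,y)\in C\}$ and every horizontal section $C^y=\{x:(x,y)\in C\}$ contains at most one point. Indeed, meeting every member of a pseudobase makes $C$ dense in the product topology. Since $X,Y$ are $T_1$, all such sections are closed, and this already forces $C$ to be $s$-closed: if $(x,y)\notin C$ then $y\notin C_x$ and $x\notin C^y$, so there are neighborhoods $V\ni y$ and $U\ni x$ with $\{x\}\times V$ and $U\times\{y\}$ disjoint from $C$, which is exactly the condition that the complement of $C$ be $s$-open. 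Finally $C$ has empty $s$-interior: a nonempty $s$-open subset of $C$ would contain a cross $(\{x\}\times V)\cup(U\times\{y\})$, hence $\{x\}\times V\subseteq C$ with $V$ a nonempty open set, forcing $V\subseteq C_x$; this is impossible because $C_x$ is a singleton while $V$ contains at least $\kappa\geq\aleph_0$ points. As $C$ is $s$-closed, $[C]_s=C$ has empty $s$-interior, so $C$ is $s$-nowhere dense.

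To build such a $C$, enumerate $\mathcal{P}=\{P_\alpha:\alpha<\kappa\}$ and choose points $c_\alpha=(a_\alpha,b_\alpha)\in P_\alpha$ by transfinite recursion, keeping all of the $a_\alpha$ pairwise distinct and all of the $b_\alpha$ pairwise distinct; then $C=\{c_\alpha:\alpha<\kappa\}$ has the required singleton sections and meets every $P_\alpha$. At stage $\alpha$ I pick a basic box $U\times V\subseteq P_\alpha$ with $U,V$ nonempty and open; the sets of already-used first and second coordinates have cardinality $|\alpha|<\kappa\leq\min(|U|,|V|)$, so one can choose $a_\alpha\in U$ and $b_\alpha\in V$ avoiding all previously used coordinates, giving $c_\alpha\in U\times V\subseteq P_\alpha$.

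The step I expect to be the crux is the translation of the two $s$-topological demands into the conditions on sections above, together with the cardinality bookkeeping that makes the recursion run: one must verify that the running count of forbidden coordinates stays strictly below $\kappa$ at every stage (which uses that $\kappa$ is a cardinal, so $|\alpha|<\kappa$ for $\alpha<\kappa$), and that the absence of isolated points—itself a consequence of the cardinality hypothesis—is precisely what prevents a singleton section from carrying a cross. Once these points are in place, density, $s$-closedness and empty $s$-interior for the resulting $C$ follow by the routine verifications sketched above.
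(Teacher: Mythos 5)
Your proof is correct and takes essentially the same route as the paper: a transfinite recursion over a pseudobase (reduced to open boxes) enumerated along the initial ordinal of its cardinality $\kappa$, choosing points with pairwise distinct first and second coordinates so that all sections of $C$ are at most singletons, whence $C$ is dense, $s$-closed (by $T_1$) and $s$-nowhere dense. You even supply two details the paper leaves implicit --- that $\kappa$ must be infinite (so the spaces have no isolated points) and the explicit ``cross'' argument showing the $s$-interior of $C$ is empty --- but the strategy is identical.
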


\begin{proof} Without loos of generality we can suppose that a needed pseudobase $\mathcal P$ consists of open rectangle. Let $|{\mathcal P}|= \aleph$, $\alpha$ is the first ordinal of the cardinality $\aleph$, ${\mathcal P}=(P_{\xi}: \xi <\alpha)$, $P_\xi =U_\xi \times V_\xi$ for every $\xi <\alpha$, moreover $U_\xi$ and $V_\xi$ are open sets in $X$ and $Y$ respectively. We construct a sequence of points $(c_\xi=(a_\xi,b_\xi):\xi < \alpha )$ such that

(1) $c_\xi \in P_\xi$ for every $\xi <\alpha$;

(2) $a_\beta \ne a_\gamma$ and $b_\beta \ne b_\gamma$ for every $\gamma <\beta<\alpha$.

Let $\beta<\alpha$ and points $c_\xi$, $\xi<\beta$ are constructed. The sets $A=\{a_\xi :\xi<\beta \}$ and $B=\{b_\xi :\xi<\beta \}$ have the cardinality $\leq \aleph$. According to the conditions of the theorem we have $|U_\beta |\ge \aleph$ and $|V_\beta |\ge \aleph$. Therefore the sets $U_\beta \setminus A$ and $V_\beta \setminus B$ are nonempty. We choose arbitrary points $a_\beta \in U_\beta \setminus A$ and $b_\beta \in V_\beta \setminus B$. It easy to see that the sequence $(c_\xi: \xi\le \beta)$ satisfy the conditions (1) and (2). Thus, according to the transfinite induction there exists the required sequence.

We put $C=\{c_\xi :\xi<\alpha \}$. It follows from (1) that the set $C$ is dense in $X\times Y$. The condition (2) means that for every point $(x,y)\in X\times Y$ the sets $C^x=\{y'\in Y:(x,y')\in C\}$ and $C_y=\{x'\in X: (x',y)\in C\}$ are at most one-pointed in $T_1$-spaces. Therefore $C^x$ and $C_y$ are closed sets in $Y$ and $X$ respectively. Hence, the set $C$ is $s$-closed. Taking into account that the spaces $X$ and $Y$ have not isolated points, we obtain that the set   $C$ is not $s$-neighborhood of every its points. Thus, $C$ is not nowhere dense in the separately continuous topology.
\end{proof}

\begin{corollary} \label{c:3.2} The space $({\bf R}^2,s)$ is not regular.
\end{corollary}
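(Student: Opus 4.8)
The plan is to derive non-regularity by playing the set produced by Theorem \ref{th:3.1} against the interior-of-closure phenomenon of Theorem \ref{th:2.1}. First I would instantiate Theorem \ref{th:3.1} with $X=Y={\bf R}$: the real line is an infinite $T_1$-space, the plane ${\bf R}^2$ is second countable and hence carries a countable pseudobase, while every nonempty open subset of ${\bf R}$ has cardinality $2^{\aleph_0}\ge\aleph_0$. Thus the hypotheses hold with $\aleph=\aleph_0$, and I obtain a (countable) set $C\subseteq{\bf R}^2$ that is dense in the usual product topology, $s$-closed, and $s$-nowhere dense. Being countable, $C$ is a proper subset of ${\bf R}^2$, so I may fix a point $p\in{\bf R}^2\setminus C$.

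Next I would argue by contradiction, assuming $({\bf R}^2,s)$ is regular. Since $C$ is $s$-closed and $p\notin C$, regularity supplies disjoint $s$-open sets $U\ni p$ and $V\supseteq C$. As ${\bf R}^2\setminus V$ is $s$-closed and contains $U$, it contains $[U]_s$; hence $[U]_s\subseteq{\bf R}^2\setminus V\subseteq{\bf R}^2\setminus C$, that is, $[U]_s\cap C=\emptyset$. The decisive step is then to apply Theorem \ref{th:2.1} to the nonempty $s$-open set $U$, which is legitimate because ${\bf R}$ is a Baire space and, being first countable, has at most countable $\pi$-character. This yields an open set $G={\rm int}\,[U]_s$ with $[G]_s=[U]_s$; since $[U]_s\supseteq U\ni p$ is nonempty, $G$ cannot be empty. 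So $G$ is a nonempty product-open set contained in $[U]_s$, whence $G\cap C=\emptyset$. But $C$ is dense in the product topology, so every nonempty product-open set meets $C$ — a contradiction. Therefore $({\bf R}^2,s)$ is not regular.

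I expect the only real subtlety to lie in the synthesis rather than in any computation. The force of the argument is that Theorem \ref{th:2.1} forbids the $s$-closure of a nonempty $s$-open set from having empty product-interior, whereas regularity would let me trap $p$ inside an $s$-neighborhood whose $s$-closure avoids the product-dense set $C$; the collision of these two facts is exactly what kills regularity. By contrast, the verification of the cardinality hypotheses of Theorem \ref{th:3.1} and of the Baire and $\pi$-character hypotheses of Theorem \ref{th:2.1} for $X=Y={\bf R}$ is routine.
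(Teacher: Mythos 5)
Your proof is correct and follows essentially the same route as the paper: take the product-dense, $s$-closed, $s$-nowhere dense set $C$ from Theorem \ref{th:3.1}, and use Theorem \ref{th:2.1} to show that the $s$-closure of any nonempty $s$-open set disjoint from $C$ contains a nonempty product-open set and hence must meet $C$, contradicting regularity. You merely spell out what the paper leaves implicit (the cardinality hypotheses for $X=Y={\bf R}$, the nonemptiness of $G$, and the explicit regularity separation of $p$ from $C$), which is fine.
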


\begin{proof} Let $C$ is a dense in ${\bf R}^2$ set, which is closed newhere dense set with respect to the separately continuous topology. We take an $s$-open set $H\subseteq {\bf R}^2\setminus C$. According to Theorem \ref{th:2.1} there exists an open in ${\bf R}^2$ set $G$ such that $[H]_s=[G]_s$. But $G\bigcap C\ne \O$, therefore $[H]_s\bigcap C\ne \O$. Thus, $({\bf R}^2,s)$ is not regular.
\end{proof}

Remark that condition on the space $Z$ in Theorem \ref{th:2.2} is weaker than the regularity. In particular, the space $({\bf R}^2,s)$ is not regular,
but every two points in it can be separated closed neighborhoods. Moreover, it follows from Theorem \ref{th:2.2} and \ref{th:3.1} that every continuous function on $({\bf R}^2,s)$ totally determined by its values on some closed nowhere dense set.

However, the following question naturally arises: is it possible to replace in the Theorem \ref{th:2.2} the condition on the space $Z$ to the regularity? The reviewer remarked that this question has the negative answer. Let $Z_1=Z_2=({\bf R}^2,s)$, $C$ is a closed nowhere dense in $({\bf R}^2,s)$ set which is dense in ${\bf R}^2$ with respect the usual topology (the existence of such set follows from Theorem \ref{th:3.1}), and $Z$ is a space which formed by conglutination of spaces $Z_1$ and $Z_2$ on the set $C$ with the factor-topology. Clearly that $Z$ is Hausdorff. But the identical mappings $f_1: ({\bf R}^2,s) \to Z_1$ and $f_2: ({\bf R}^2,s)\to Z_2$ are continuous mappings with values in $Z$, which coincides on $C$ and are different in other points.

\section{Acknowledgment}
The author would like to thank Maslyuchenko V.K. for his helpful comments.

\bibliographystyle{amsplain}

\begin{thebibliography}{10}

\bibitem{S} Sierpi\'nski W. {\it Sur une propertie de fonctions de deux variables reeles, continues par rapport \'a chacune de variables} Publ. Math. Univ. Belgrade. {\bf 1} (1932), 125-128.

\bibitem{PW} Piotrowski Z., Wingler E.Y. {\it On Sierpi\'nski's theorem on the determination of separately continuous functions} Q$\&$A in General Topology. {\bf 15} (1997), 15-19.

\bibitem{P} Piotrowski Z.{\it Quasi-continuity and product spaces} Proc. Intern. Geom. Top. Warshawa (1980), 349-352.

\end{thebibliography}

\end{document}